\DeclareMathOperator*{\hocolim}{hocolim\,}
\DeclareMathOperator*{\holim}{holim}
\DeclareMathOperator*{\Product}{\prod}
\newtheorem{theorem}{Theorem}[section]
\newtheorem{lemma}[theorem]{Lemma}
\newtheorem{corollary}[theorem]{Corollary}
\newtheorem{proposition}[theorem]{Proposition}
\newtheorem{example}[theorem]{Example}
\newtheorem{definition}[theorem]{Definition}
\newtheorem{remark}[theorem]{Remark}
\begin{document}

\newcommand{\mx}{\textrm{max}}
\newcommand{\bU}{\ul{\bf U}}
\newcommand{\UF}{\ul{\bf UF}}
\newcommand{\Gp}{{\mathrm{\bf Group}}}
\newcommand{\Cat}{{\mathrm{\bf Cat}}}
\newcommand{\Q}{\ul{\mathrm{\bf Quot}}}
\newcommand{\bbA}{\mathbb{A}}
\newcommand{\bbC}{\mathbb{C}}
\newcommand{\C}{\mathcal{C}}
\newcommand{\D}{\mathcal{D}}
\newcommand{\N}{\mathbb{N}}
\newcommand{\bbN}{\mathbb{N}}
\newcommand{\bbR}{\mathbb{R}}
\newcommand{\bbRP}{\mathbb{RP}}
\newcommand{\bbZ}{\mathbb{Z}}
\newcommand{\mcZ}{\mathcal{Z}}
\newcommand{\bbF}{\mathbb{F}}
\newcommand{\bbQ}{\mathbb{Q}}

\newcommand{\A}{\mathcal{A}}
\newcommand{\B}{\mathcal{B}}
\newcommand{\mcC}{\mathcal{C}}
\newcommand{\mcD}{\mathcal{D}}
\newcommand{\E}{\mathcal{E}}
\newcommand{\mF}{\mathcal{F}}

\newcommand{\bF}{\ul{\bf F}}

\newcommand{\F}{\mathcal{F}}
\newcommand{\G}{\mathcal{G}}
\newcommand{\mcG}{\mathcal{G}}
\newcommand{\mH}{\mathcal{H}}
\newcommand{\mcH}{\mathcal{H}} 
\newcommand{\I}{\mathcal{I}}
\newcommand{\mL}{\mathcal{L}}
\newcommand{\mcN}{\mathcal{N}}
\newcommand{\M}{\mathcal{M}}
\newcommand{\mO}{\mathcal{O}}
\newcommand{\mcP}{\mathcal{P}}
\newcommand{\mcR}{\mathcal{R}}
\newcommand{\T}{\mathcal{T}}
\newcommand{\U}{\mathcal{U}}
\newcommand{\V}{\mathcal{V}}
\newcommand{\Z}{\mathcal{Z}}

\newcommand{\bS}{\mathbf{S}}
\newcommand{\bd}{\mathbf{d}}
\newcommand{\nv}{\mathrm{N}_\cdot}
\newcommand{\nvk}{\mathrm{N}_k}
\newcommand{\bG}{\mathcal{G}_0} 
\newcommand{\sth}[1]{#1^{\mathrm{th}}}
\newcommand{\abs}[1]{\left| #1\right|}
\newcommand{\ord}[1]{\Delta \left( #1 \right)}
\newcommand{\leqs}{\leqslant}
\newcommand{\geqs}{\geqslant}
\newcommand{\heq}{\simeq}
\newcommand{\iso}{\simeq}
\newcommand{\maps}{\longrightarrow}
\newcommand{\lmaps}{\longleftarrow}
\newcommand{\injects}{\hookrightarrow}
\newcommand{\homeo}{\cong}
\newcommand{\surjects}{\twoheadrightarrow}
\newcommand{\isom}{\cong}
\newcommand{\cross}{\times}
\newcommand{\normal}{\vartriangleleft}
\newcommand{\wt}[1]{\widetilde{#1}} 
\newcommand{\fc}{\mathcal{A}_{\mathrm{flat}}} 
\newcommand{\flc}{\mathcal{A}_{\mathrm{fl}}}

\newcommand{\Rdef}{R^{\mathrm{def}}}
\newcommand{\Sym}{\textrm{Sym}}
\newcommand{\vect}[1]{\stackrel{\rightharpoonup}{\mathbf #1}}
\newcommand{\SR}{\mathcal{SR}}
\newcommand{\SRe}{\mathcal{SR}^{\mathrm{even}}}
\newcommand{\Rep}{\mathrm{Rep}}
\newcommand{\SRep}{\mathrm{SRep}}
\newcommand{\Hom}{\mathrm{Hom}}
\newcommand{\HHom}{\mathcal{H}\mathrm{om}}
\newcommand{\Lie}{\mathrm{Lie}}
\newcommand{\K}{K^{\mathrm{def}}}
\newcommand{\mK}{\mathcal{K}_{\mathrm{def}}}
\newcommand{\SK}{SK_{\mathrm{def}}}
\newcommand{\dom}{\mathrm{dom}}
\newcommand{\codom}{\mathrm{codom}}
\newcommand{\Ob}{\mathrm{Ob}}
\newcommand{\Mor}{\mathrm{Mor}}
\newcommand{\+}[1]{\underline{#1}_+}
\newcommand{\ul}[1]{\underline{#1}}
\newcommand{\hofib}{\mathrm{hofib}}
\newcommand{\Stab}{\mathrm{Stab}}
\newcommand{\wtStab}{\wt{\mathrm{Stab}}}
\newcommand{\Css}{\mathcal{C}_{ss}}
\newcommand{\Map}{\mathrm{Map}}
\newcommand{\bMap}{\mathrm{Map_*}}
\newcommand{\bdMap}{\mathrm{Map_*^\delta}}
\newcommand{\flatc}{\mathcal{A}_{\mathrm{flat}}}
\newcommand{\p}{\vect{p}}
\newcommand{\avg}{\mathrm{avg}}
\newcommand{\smsh}[1]{\ensuremath{\mathop{\wedge}_{#1}}}
\newcommand{\ol}[1]{\overline{#1}}
\newcommand{\Vect}{\mathrm{Vect}}
\newcommand{\bv}{\bigvee}
\newcommand{\Gr}{\mathrm{Gr}}
\newcommand{\Mf}{\mathcal{M}_{\textrm{flat}}}
\newcommand{\ku}{\mathbf{ku}}
\newcommand{\Susp}{\Sigma}
\newcommand{\Id}{\textrm{Id}}
\newcommand{\id}{\textrm{Id}}
\newcommand{\xmaps}{\xrightarrow}
\newcommand{\srm}[1]{\stackrel{#1}{\maps}}
\newcommand{\srt}[1]{\stackrel{#1}{\to}}
\newcommand{\sm}{\wedge}
\newcommand{\conv}{\Rightarrow}
\newcommand{\Tor}{\textrm{Tor}}
\newcommand{\goesto}{\mapsto}
\newcommand{\nd}{\noindent}
\newcommand{\Ind}{\mathrm{Ind}}
\newcommand{\R}{\mathrm{R}}
\newcommand{\bR}{\overline{\mathrm{R}}}
\newcommand{\bRf}{\overline{\mathrm{R}}^{\mathrm{free}}}
\newcommand{\ra}{\rangle}
\newcommand{\la}{\langle}
\newcommand{\Sum}{\mathrm{Sum}}
\newcommand{\Res}{\mathrm{Res}}
\newcommand{\Proj}{\mathrm{Proj}}
\newcommand{\GL}{\mathrm{GL}}
\newcommand{\PU}{\mathrm{PU}}
\newcommand{\Irr}{\mathrm{Irr}}
\newcommand{\Irrp}{\mathrm{Irr}^+}
\newcommand{\rHom}{\Hom_A (\Gamma, U(n))}
\newcommand{\qcd}{\mathrm{qcd}}
\newcommand{\rk}{\mathrm{rk}}
\newcommand{\Irrf}{\Irr_n (H)^{\mathrm{free}}}
\newcommand{\brho}{\overline{\rho}}
\newcommand{\Sp}{\mathrm{Sp}}
\newcommand{\Span}{\mathrm{Span}}
\newcommand{\Orb}{\mathrm{Orb}}
\newcommand{\Fin}{{\mathcal{F}in}}
\newcommand{\MorF}{\mathrm{Mor}_{\Orb_\F(\Gamma)}}
\newcommand{\Sets}{\mathrm{\bf Set}}
\newcommand{\Top}{\mathrm{\bf Top}}
\newcommand{\op}{\mathrm{op}}
\newcommand{\down}{\downarrow}

\newcommand{\e}{\emph}

\def\co{\colon\thinspace}

\title[Generalized homotopy fixed points]{A note on orbit categories, classifying spaces, and generalized homotopy fixed points}
\author[D\,A Ramras]{Daniel A. Ramras}
\address{Department of Mathematical Sciences, Indiana University-Purdue University 
Indianapolis, 402 N. Blackford, LD 270, Indianapolis, IN 46202, USA}
\email{dramras@iupui.edu}
\urladdr{http://www.math.iupui.edu/~dramras/}

\keywords{Classifying space, family of subgroups, orbit category, homotopy fixed points}

\subjclass[2010]{Primary 55R35; Secondary 18F25.}
\thanks{This work was partially supported by a grant from the Simons Foundation (\#279007).}

 \begin{abstract}
We give a new description of Rosenthal's generalized homotopy fixed point spaces as homotopy limits over the orbit category.  This is achieved 
using a simple categorical model for classifying spaces with respect to families of subgroups.  
\end{abstract}

\maketitle{}


\section{Introduction}

Let $\Gamma$ be a discrete group.  A \e{family} of subgroups of $\Gamma$ is a non-empty set $\F$ of subgroups of $\Gamma$ such that 1) if $F\in\F$ and $\gamma\in \Gamma$, then $\gamma F \gamma^{-1} \in \F$; and 2) if $F\in \F$ and $G\leqs F$, then $G\in \F$.
Given a family $\F$
of subgroups
 of $\Gamma$,  a \e{classifying space} for $\F$ is a $\Gamma$--CW complex $\E_\F \Gamma$ whose fixed point sets $(\E_\F \Gamma)^H$ are contractible when $H\in \F$ and empty when $H\notin \F$.  
 If $E$ and $E'$ are both classifying spaces for $\F$, then there exists a $G$--homotopy equivalence $E\xmaps{\heq} E'$~\cite[Section 1]{Luck-survey}.

In this note, we highlight an extremely simple model for $\E_\F \Gamma$
and use it to give a new construction of Rosenthal's generalized homotopy fixed point sets~\cite{Rosenthal}.  This model comes from a category $E_\F \Gamma$, on which $\Gamma$ acts by functors, whose geometric realization $|E_\F \Gamma|$ is a functorial model for $\E_\F \Gamma$ (see Proposition~\ref{classifying-space} and Proposition~\ref{functorial}).   

While this model is not new, it seems not to be as widely known as it deserves to be.  The construction described here is commonly used for \e{collections} of subgroups, which are closed under conjugation but not necessarily under passage to subgroups.  In that context, $\E_\F \Gamma$ is used to study homology decompositions for finite groups; see in particular~\cite{Dwyer-homology-decompositions, Grodal}.  The nerve of $E_\F \Gamma$ is precisely the bar construction model for $\E_\F \Gamma$ presented by Elmendorf~\cite[pp. 277-278]{Elmendorf}, where methods of equivariant topology are used to show this simplicial set is a classifying space for $\F$; in the Appendix to the recent preprint~\cite{MNN}, Mathew, Naumann, and Noel give another proof of this fact using properties of homotopy colimits.  In Section~\ref{orbit-cat}, we give a short categorical proof that $|E_\F \Gamma|$ is a classifying space for $\F$, similar to Dwyer~\cite[Corollary 2.15]{Dwyer-homology-decompositions}.  
 
We give two applications.  
As observed by Grodal~\cite[Corollary 2.10]{Grodal}, 
the quotient space $|E_\F \Gamma|/\Gamma$ is the geometric realization of the orbit category $\Orb_\F (\Gamma)$ (namely, the full subcategory of the orbit category of $\Gamma$ on the objects $\Gamma/F$ with $F\in \F$; see Section~\ref{orbit-cat}). 
It then follows from work of Leary and Nucinkis~\cite{Leary-Nucinkis} that every connected CW complex is homotopy equivalent to $\Orb_\Fin (\Gamma)$ for some $\Gamma$, where $\Fin = \Fin (\Gamma)$ is the family of finite subgroups.
In Section~\ref{fixed-sets} we use the categorical viewpoint on classifying spaces for families to show that generalized homotopy fixed point sets can be described as homotopy limits over the orbit category. Rosenthal's homotopy invariance result for these spaces is then a formal consequence.  Generalized homotopy fixed point sets play an important role in the study of assembly maps in algebraic $K$-- and $L$--theory~\cite{Bartels-domain, Bartels-Rosenthal, Kasprowski}, and also appear  in~\cite{MNN} (in the form of the homotopy limit construction presented in Section~\ref{fixed-sets}), where they are used to study induction and restriction maps for equivariant spectra.

\vspace{.15in}

\nd {\bf Acknowledgements:} The author thanks Jesper Grodal for pointing out earlier appearances of the category $E_\F \Gamma$ in the literature, and Mark Ullmann and David Rosenthal for helpful comments. Additionally, the author thanks the referee and editor for helping to improve the exposition.

%
%


\section{The orbit category and a categorical model for $\E_\F \Gamma$} $\label{orbit-cat}$

For a fixed family of subgroups $\F$ of a group $\Gamma$, the orbit category $\Orb_\F (\Gamma)$ is the category whose objects are the ``homogeneous" left $\Gamma$--sets $\Gamma/F$, for $F\in \F$ (so the objects are in explicit bijection with $\F$ itself) and where 
$$\MorF (\Gamma/F, \Gamma/G) = \Map^\Gamma (\Gamma/F, \Gamma/G),$$
the set of $\Gamma$--equivariant maps between these left $\Gamma$--sets.

\begin{lemma} $\label{morphisms}$
For each $F, G\in \F$, there is a bijection
$$\xi \co \MorF (\Gamma/F, \Gamma/G) \srm{\isom} (\Gamma/G)^F
= \{\gamma G  \,:\, \gamma^{-1} F\gamma \subset G\}.$$
defined by $\xi (\phi)  =\phi(1F)$.  Here $(\Gamma/G)^F$ denotes the $F$--fixed points of $\Gamma/G$ under the left multiplication action of $F\leqs \Gamma$.
Hence we may write morphisms $\Gamma/F\to \Gamma/G$  in $\Orb_\F (\Gamma)$ as equivalence classes $[\gamma] = \gamma G$ of elements in $\Gamma$. 
\end{lemma}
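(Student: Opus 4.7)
The plan is to prove this is a bijection by the standard argument that a $\Gamma$-equivariant map out of a transitive $\Gamma$-set is determined by, and may be freely prescribed by, its value at a basepoint, subject to a stabilizer compatibility condition.

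First, I would check that $\xi$ is well-defined, i.e.\ that $\phi(1F) \in (\Gamma/G)^F$. This follows because for any $f \in F$, we have $f \cdot 1F = 1F$, so $\Gamma$-equivariance gives $\phi(1F) = \phi(f \cdot 1F) = f \cdot \phi(1F)$. I would also verify the explicit description of the fixed set: $\gamma G \in (\Gamma/G)^F$ iff $f\gamma G = \gamma G$ for all $f \in F$, iff $\gamma^{-1}f\gamma \in G$ for all $f \in F$, iff $\gamma^{-1}F\gamma \subset G$.

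Next, injectivity of $\xi$ is immediate: any $\phi \in \MorF(\Gamma/F,\Gamma/G)$ satisfies $\phi(\gamma F) = \gamma \cdot \phi(1F)$ by $\Gamma$-equivariance, so $\phi$ is entirely determined by the value $\phi(1F)$.

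For surjectivity, given $\gamma G \in (\Gamma/G)^F$, I would define $\phi_\gamma \co \Gamma/F \to \Gamma/G$ by $\phi_\gamma(\gamma' F) = \gamma' \gamma G$, and show this is well-defined and $\Gamma$-equivariant. The only point requiring care, and the closest thing to an obstacle, is well-definedness on cosets: if $\gamma' F = \gamma'' F$, write $\gamma'' = \gamma' f$ with $f \in F$; then $\gamma'' \gamma G = \gamma' \gamma (\gamma^{-1} f \gamma) G = \gamma'\gamma G$, using exactly the condition $\gamma^{-1}F\gamma \subset G$ coming from $\gamma G \in (\Gamma/G)^F$. Equivariance is clear from the definition, and $\phi_\gamma(1F) = \gamma G$, so $\xi(\phi_\gamma) = \gamma G$, completing the proof.
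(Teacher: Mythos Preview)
Your proof is correct and is exactly the standard argument for this well-known fact. The paper actually states this lemma without proof, treating it as elementary background; your write-up supplies precisely the details one would expect, with no gaps.
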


\begin{example} Let $\mathds{1}$ denote the trivial family containing only the trivial subgroup of $\Gamma$.  Then $\Orb_{\mathds{1}} (\Gamma)$ 
is isomorphic to the usual one-object category modeling the classifying space $B\Gamma = K(\Gamma, 1)$.
\end{example}

\begin{definition}\label{UF} The functor $\U_\F: \Orb_\F (\Gamma) \to \Sets$ sends each object $\Gamma/F$ to the set $\Gamma/F$, and sends each morphism $\Gamma/F \to \Gamma/G$ to its underlying equivariant map.

We define $E_\F \Gamma$ to be the Grothendieck wreath product category $\Orb_\F (\Gamma) \wr \U_\F$.
\end{definition}
 
We briefly recall the definition of the wreath product $\C \wr F$ of a category $\C$ with a functor $F: \C\to \Sets$.  This category has objects all pairs $(C, x)$ with $C\in \Ob (C)$ and $x\in F(C)$, and morphisms $(C,x) \to (D,y)$ are pairs $(\phi, x)$ where $\phi\in \Mor_\C (C, D)$ and $F(\phi)(x) = y$.  (We will often abbreviate $(\phi, x)$ to $\phi$.)  Composition is given by $(\psi, y) \circ (\phi, x) = (\psi\circ \phi, x)$. 

Objects of $E_\F \Gamma = \Orb_\F (\Gamma) \wr \U_\F$ are pairs $(\Gamma/F, \gamma F)$ with $\gamma\in \Gamma$ and $F\in \F$.  Note that there is a natural ``forgetful" functor $E_\F \Gamma\to \Orb_\F (\Gamma)$ sending $(\Gamma/F, \gamma F)$ to $\Gamma/F$ and sending $\phi: \Gamma/F \to \Gamma/G$ to itself.

Since a subset of a group cannot be a left coset of more than one subgroup, we can describe $E_\F \Gamma$ more explicitly. 

\begin{lemma}$\label{E_F}$ The category $E_\F \Gamma$ is isomorphic to the category whose objects are left cosets $\gamma F$ satisfying $\gamma\in \Gamma$ and $F\in \F$, and whose  morphisms $\gamma F \to \nu G$ are $\Gamma$--equivariant maps $\phi: \Gamma/F \to \Gamma/G$ such that $\phi(\gamma F) = \nu G$, with the usual composition.
\end{lemma}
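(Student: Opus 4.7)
The plan is to write down an explicit comparison functor $\Psi$ from $E_\F \Gamma = \Orb_\F (\Gamma) \wr \U_\F$ to the category $\D$ described in the statement, and then check that $\Psi$ is bijective on objects and on morphism sets. On objects I will set $\Psi(\Gamma/F, \gamma F) = \gamma F$, viewed as a subset of $\Gamma$; on morphisms I will send $(\phi, \gamma F)$ to $\phi$. The defining condition $\U_\F(\phi)(\gamma F) = \nu G$ of a morphism in the wreath product is literally the condition $\phi(\gamma F) = \nu G$ that appears in the statement, so $\Psi$ is well-defined. Since composition in both categories is just composition of the underlying equivariant maps $\phi$, and since $\Psi$ clearly sends identities to identities, functoriality is automatic.

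The substantive point is bijectivity on objects, which uses the fact already flagged in the text: a nonempty subset of $\Gamma$ is a left coset of at most one subgroup. Concretely, if $\gamma F = \gamma' F'$ as subsets of $\Gamma$, pick $f \in F$ with $\gamma' = \gamma f$; then $\gamma F = \gamma f F'$, so $F = f F'$ and therefore $F' = f^{-1} F = F$. Hence distinct pairs $(\Gamma/F, \gamma F)$ map to distinct cosets, and surjectivity on objects is immediate.

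For the morphism bijection, the above remark recovers both $F$ and the wreath-product object $(\Gamma/F, \gamma F)$ uniquely from the coset $\gamma F$ (and similarly for the target). A morphism $\gamma F \to \nu G$ in $\D$ is then tautologically the same data as a morphism $(\phi, \gamma F) : (\Gamma/F, \gamma F) \to (\Gamma/G, \nu G)$ in the wreath product. I expect the coset-determines-subgroup observation to be the only genuine content here; the rest is bookkeeping in the wreath product construction.
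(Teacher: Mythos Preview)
Your proposal is correct and follows exactly the approach the paper indicates: the paper does not give a formal proof of this lemma, but simply prefaces it with the sentence ``Since a subset of a group cannot be a left coset of more than one subgroup, we can describe $E_\F \Gamma$ more explicitly,'' which is precisely the key observation you isolate and verify. Your write-up just makes explicit the bookkeeping the paper leaves to the reader.
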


We will sometimes treat this isomorphism as an identification in what follows.

\begin{example} 
The category $E_{\mathds{1}} \Gamma$ is the usual categorical model for the universal principal $\Gamma$--bundle $E\Gamma$ $($see~\cite{Segal-class-ss}, for instance$)$, and the forgetful functor $E_{\mathds{1}} \Gamma \to \Orb_{\mathds{1}} (\Gamma)$ realizes to the projection $E\Gamma\to B\Gamma$.
\end{example}

\begin{remark} $\label{morphisms-rmk}$ Since $\Gamma$ acts transitively on each set $\Gamma/F$, there is at most one morphism in $E_\F \Gamma$ between any two objects.
Hence all diagrams in $E_\F \Gamma$ commute.
\end{remark}

We now study the action of $\Gamma$ on $E_\F \Gamma$.  Our discussion will be framed in terms of homotopy colimits.
Given a functor $\bF \co \C\to \Cat$ (the category of small categories) one may construct a wreath product category $\C\wr \bF$ specializing to the construction considered above when $\bF$ takes values in $\Sets$ (that is, discrete categories).
Thomason~\cite{Thomason-thesis} shows that in general, there is a natural map
of simplicial sets
$$\hocolim_\C \nv \bF\maps \nv (\C\wr \bF)$$
which induces a homotopy equivalence on geometric realizations.
In the case where $\bF$ takes values in $\Sets$, 
$\hocolim_\C \bF$ is the simplicial set whose $n$--simplices consist of an $n$--simplex $C_0 \to \cdots \to C_n$ in $\nv \C$ together with an element $x\in \bF(C_0)$, and one finds that there is in fact a natural \e{isomorphism}
\begin{equation}\label{nv} \hocolim_\C \bF\isom  \nv (\C\wr \bF) .
\end{equation}

Let $\Gamma$--$\Sets$ denote the category of (left) $\Gamma$--sets and equivariant maps.  We then have two functors
$\Gamma$--$\Sets \to \Sets$, the forgetful functor $\bU$ and the quotient set functor $\Q$.
Note that the functor $\U_\F$ in Definition~\ref{UF} is simply the composite functor
$$\Orb_\F (\Gamma) \injects \Gamma\textrm{--}\Sets \srm{\bU} \Sets,$$
where the first functor is the natural inclusion.
Now consider a functor 
$$\bF \co \C\to \Gamma\textrm{--}\Sets.$$
Letting $\UF := \bU \circ \bF$ and $\bF/\Gamma := \Q \circ \bF$, 
Equation (\ref{nv})  gives two natural isomorphisms:
\begin{equation}\label{nv2}\nv (\C\wr \UF) \isom \hocolim_\C  \UF \,\,\,\textrm{ and } \,\,\,
\nv (\C\wr (\bF/\Gamma)) \isom \hocolim_\C  \bF/\Gamma.
\end{equation}

The wreath product category
$\C \wr \UF$
inherits an action of $\Gamma$ by functors 
\begin{equation}\label{act}\ul{\gamma}\co \C \wr \UF\maps \C \wr \UF\end{equation}
for each $\gamma \in \Gamma$: on objects, we set $\ul{\gamma} (C, x) = (C, \gamma\cdot x)$, and on morphisms we set $\ul{\gamma} ( \phi) = \phi$.  The nerve $\nv (\C \wr \bF)$ is then a $\Gamma$--simplicial set, and $|\C \wr \bF|$ is a $\Gamma$--CW complex.

In general, for a functor $\bF \co \C \to \Gamma$--$\Sets$, it follows immediately from the definitions that there is a natural isomorphism of simplicial sets 
\begin{equation*}\label{h1}(\hocolim_\C \UF)/\Gamma \srm{\isom} \hocolim_\C (\bF/\Gamma)
\end{equation*} 
induced by the canonical natural transformation $\UF\to \bF/\Gamma$.  On the left, $\Gamma$ acts simplicially on $\hocolim_\C \UF$ and the quotient can be formed level-wise; on the other hand, this quotient object is the colimit, in the category of simplicial sets, of the $\Gamma$--action.  Geometric realization commutes with colimits, giving a homeomorphism
\begin{equation}\label{h2}|\hocolim_\C \UF|/\Gamma \srm{\isom} |\hocolim_\C (\bF/\Gamma)|.
\end{equation} 
Since the actions of $\Gamma$ on $\nv (\C\wr \UF)$ and on $\hocolim_\C \UF$ are compatible under the isomorphism (\ref{nv}), combining 
(\ref{nv2}) and (\ref{h2}) gives a chain of natural homeomorphisms
\begin{equation}\label{quotients}
|\nv (\C\wr \UF)|/\Gamma \homeo |\hocolim_\C \UF|/\Gamma \homeo
|\hocolim_\C (\bF/\Gamma)| \homeo |\nv (\C\wr (\bF/\Gamma))|.
\end{equation} 

\begin{remark} The category $\C\wr (\bF/\Gamma)$ is the colimit, in $\Cat$, of the $\Gamma$--action on $\C \wr \bF$ $($as can be checked using the universal property of colimits$)$.  However, the nerve functor does not always commute with colimits $($even for group actions~\cite{Babson-Kozlov-gp-actions}$)$ so  $(\ref{quotients})$ does not follow immediately. \end{remark}

We need to understand the fixed points for the action of $\Gamma$ on $E_\F \Gamma$.

\begin{lemma}$\label{fixed}$
Let $G\leqs \Gamma$ be a subgroup.  The subcategory $(E_\F \Gamma)^G$ of $E_\F \Gamma$, consisting of objects and morphisms fixed by $G$, is the full subcategory on the objects 
 $(\Gamma/F, \gamma F)$, with $F\in \F$ and $\gamma \in \Gamma$, such that $\gamma^{-1} G \gamma \leqs F$.
\end{lemma}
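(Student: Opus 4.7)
The plan is to unpack the $\Gamma$-action on $E_\F\Gamma$ as defined in equation (\ref{act}) and simply read off the fixed objects and morphisms. Recall that under the identification of Lemma~\ref{E_F}, an object of $E_\F\Gamma$ is a coset $\alpha F$ (with $F\in\F$, $\alpha\in\Gamma$), regarded as the pair $(\Gamma/F, \alpha F)$ in the wreath product $\Orb_\F(\Gamma)\wr \U_\F$, and the action is given on objects by $\ul{g}(\Gamma/F,\alpha F)=(\Gamma/F,g\cdot \alpha F)=(\Gamma/F,(g\alpha)F)$.

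First I will identify the fixed objects. An object $(\Gamma/F,\alpha F)$ is fixed by every $g\in G$ precisely when $g\alpha F=\alpha F$ for all $g\in G$, equivalently $\alpha^{-1}g\alpha\in F$ for all $g\in G$, equivalently $\alpha^{-1}G\alpha\leqs F$. This is exactly the condition in the statement.

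Next I will handle morphisms. By definition of the wreath product action, $\ul{g}$ sends a morphism $(\phi,\alpha F)\co(\Gamma/F,\alpha F)\to(\Gamma/F',\phi(\alpha F))$ to $(\phi,g\alpha F)$, with underlying equivariant map $\phi$ unchanged. Hence once the source $(\Gamma/F,\alpha F)$ is $G$-fixed, the relabeled morphism $(\phi,g\alpha F)$ coincides with $(\phi,\alpha F)$, and since $\phi$ is $\Gamma$-equivariant the target is also automatically $G$-fixed (indeed $\phi(g\alpha F)=g\phi(\alpha F)=\phi(\alpha F)$). Conversely, a fixed morphism must have fixed source and target. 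Thus every morphism between two $G$-fixed objects is itself $G$-fixed, with no additional constraint.

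The main (modest) obstacle is just being careful with the bookkeeping of the wreath product notation—in particular, verifying that fixing the source forces the target to be fixed, so that $(E_\F\Gamma)^G$ really is a \emph{full} subcategory rather than a subcategory with restricted morphism sets. Combining the two paragraphs above yields the stated description of $(E_\F\Gamma)^G$.
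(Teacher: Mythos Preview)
Your proof is correct and follows essentially the same approach as the paper: identify the $G$--fixed objects via the left-multiplication action on cosets, and then observe that the action is trivial on the underlying equivariant map $\phi$ so that fullness is automatic. The paper compresses your morphism paragraph into the single remark that fullness ``is immediate from the definition of the action,'' but your more explicit verification is exactly what underlies that sentence.
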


\begin{proof} Under the isomorphism in Lemma~\ref{E_F}, the action of $G$ on the objects of $E_\F \Gamma$ translates to the left multiplication action of $\Gamma$ on cosets, and 
$\gamma F \in (\Gamma/F)^G$ if and only if $\gamma^{-1} G \gamma \leqs F$. Hence the objects of  $(E_\F \Gamma)^G$ are as claimed. 
It is immediate from the definition of the action that $(E_\F \Gamma)^G$ is a full subcategory.
\end{proof}

We come now to the main result of this section.

\begin{proposition}$\label{classifying-space}$ The geometric realization $|E_\F \Gamma|$ of the category $E_\F \Gamma$  is a 
classifying space for the family $\F$, and the forgetful functor $E_\F \Gamma \to \Orb_\F (\Gamma)$ induces a homeomorphism
$$|E_\F \Gamma|/\Gamma \isom |\Orb_\F (\Gamma)|.$$  
\end{proposition}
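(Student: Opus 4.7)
The plan is to establish the two claims separately, both by feeding the setup into the general machinery already assembled. For the quotient homeomorphism, I would apply the chain of natural homeomorphisms~(\ref{quotients}) with $\C = \Orb_\F(\Gamma)$ and $\bF$ the tautological inclusion $\Orb_\F (\Gamma) \injects \Gamma\textrm{--}\Sets$; then $\UF = \U_\F$ and $\C\wr \UF = E_\F \Gamma$ by Definition~\ref{UF}. Because each orbit $(\Gamma/F)/\Gamma$ is a single point, the composite $\bF/\Gamma = \Q \circ \bF$ is the constant functor valued at a point, and unwinding the definition of the wreath product shows directly that $\Orb_\F (\Gamma) \wr (\bF/\Gamma)$ has one object over each $\Gamma/F$ and a unique morphism lying over each morphism of $\Orb_\F(\Gamma)$, hence is isomorphic to $\Orb_\F (\Gamma)$ itself. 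Substituting into~(\ref{quotients}) yields $|E_\F \Gamma|/\Gamma \homeo |\Orb_\F (\Gamma)|$, and tracing the maps confirms that this homeomorphism is induced by the forgetful functor.

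For the classifying space property, the discussion following~(\ref{act}) already shows that $|E_\F \Gamma|$ is a $\Gamma$--CW complex. Since the $\Gamma$-action permutes simplices of $\nv(E_\F \Gamma)$, we have $|E_\F \Gamma|^H = |(E_\F \Gamma)^H|$ for every subgroup $H \leqs \Gamma$, so it suffices to analyze the subcategory described by Lemma~\ref{fixed}. If $H \notin \F$, any object $(\Gamma/F, \gamma F)$ of $(E_\F \Gamma)^H$ would satisfy $\gamma^{-1} H \gamma \leqs F \in \F$; closure of $\F$ under subgroups and conjugation then forces $H \in \F$, a contradiction, so $(E_\F \Gamma)^H$ is empty.

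If instead $H \in \F$, I would show that $(\Gamma/H, H)$ is an initial object of $(E_\F \Gamma)^H$, from which contractibility of the realization follows since a category with an initial object has contractible nerve. Given any object $(\Gamma/F, \gamma F) \in (E_\F \Gamma)^H$, Lemma~\ref{fixed} supplies the relation $\gamma^{-1} H \gamma \leqs F$, which by Lemma~\ref{morphisms} is exactly the condition needed to define a $\Gamma$--equivariant map $\phi \co \Gamma/H \to \Gamma/F$ sending $H$ to $\gamma F$; under the identification of Lemma~\ref{E_F}, this $\phi$ is a morphism $(\Gamma/H, H) \to (\Gamma/F, \gamma F)$ in $E_\F \Gamma$, and it is the unique such morphism by Remark~\ref{morphisms-rmk}. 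The only step requiring genuine input is this construction of an initial object; the rest is bookkeeping built on the preceding lemmas and~(\ref{quotients}).
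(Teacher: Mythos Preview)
Your proposal is correct and follows essentially the same route as the paper: both arguments deduce the quotient homeomorphism from~(\ref{quotients}) by noting that $\U_\F/\Gamma$ is point-valued, and both verify the classifying-space property by showing $(E_\F\Gamma)^H$ is empty for $H\notin\F$ and has initial object $(\Gamma/H, 1H)$ for $H\in\F$. The only place the paper says a bit more is in justifying $|E_\F\Gamma|^H = |(E_\F\Gamma)^H|$: rather than asserting it from ``permutes simplices,'' the paper argues that any $H$--fixed point lies in a unique non-degenerate simplex of $\nv(E_\F\Gamma)$, which must therefore itself be $H$--fixed.
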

\begin{proof} 
If a group $H$ acts on a small category $\C$, then the inclusion $i:|\C^H| \injects |\C|^H$ is surjective, hence a homeomorphism.  Indeed, each $x\in |\C|^H$ lies in a unique non-degenerate simplex $\sigma$ in $\nv \C$.  Since $H$ fixes $x$, it fixes $\sigma$, so $\sigma\in \nv (\C^H)$.  

It remains to show that $(E_\F \Gamma)^G$ is empty for $G\notin \F$ and contractible for $G\in \F$.
If $(E_\F \Gamma)^G$ is non-empty, then some object $(\Gamma/F, \gamma F)$ in $E_\F \Gamma$ is fixed by $G$.  By Lemma~\ref{fixed}, we know that $\gamma^{-1} G \gamma \leqs F$, and
since $\F$ is a family of subgroups, it follows that $G \in \F$.  So $(E_\F \Gamma)^G = \emptyset$ when $G\notin \F$.
Next, one checks (using Lemmas~\ref{fixed} and~\ref{morphisms} and Remark~\ref{morphisms-rmk}) that for each $F\in \F$, the object $(\Gamma/F, 1F)$ is initial in $(E_\F \Gamma)^{F}$.  Hence $|(E_\F \Gamma)^{F}|$ is contractible~\cite[Section 1]{Quillen}.

Finally, since each of the $\Gamma$--sets $\Gamma/F$ is transitive, 
the functor $\U_\F/\Gamma$ sends each object of $\Orb_\F (\Gamma)$ to a one-element set. 
Hence $\Orb_\F (\Gamma) \wr (\U_\F/\Gamma) \isom \Orb_\F (\Gamma)$, and
(\ref{quotients}) completes the proof.
\end{proof}

\begin{proposition}$\label{functorial}$ The construction in Proposition~\ref{classifying-space} is functorial in the following sense.  
Group homomorphisms $\Gamma' \xmaps{h} \Gamma$ induce functors  
$$E_{h^{-1} (\F)} (\Gamma') \srm{h_\F} E_\F \Gamma,$$
where $h^{-1} (\F)$ is the family $\{G\leqs \Gamma' \,:\, h(G) \in \F\}$.
These functors are equivariant in the sense that for each $\gamma'\in \Gamma'$, we have
$$\ul{h(\gamma')} \circ h_\F = h_\F \circ \ul{\gamma'},$$
where $\ul{\gamma'}$ and $\ul{h(\gamma')}$ are the functors defined in $(\ref{act})$.

Moreover, for all homomorphisms $\Gamma''\xmaps{k} \Gamma' \xmaps{h} \Gamma$, we have 
$$(h\circ k)_\F = h_\F \circ k_{h^{-1} \F}.$$
\end{proposition}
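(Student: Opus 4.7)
The plan is to define $h_\F$ explicitly using the coset model of Lemma~\ref{E_F}, and then verify the stated properties by direct computation, drawing on the fact that $h$ and $k$ are group homomorphisms.

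On objects, I set $h_\F(\gamma' F') = h(\gamma')\, h(F')$, which is an object of $E_\F \Gamma$ because $F' \in h^{-1}(\F)$ forces $h(F') \in \F$. Using Lemma~\ref{morphisms}, any morphism $\gamma'_1 F'_1 \to \gamma'_2 F'_2$ in $E_{h^{-1}(\F)}(\Gamma')$ is determined by an element $\delta' \in \Gamma'$ satisfying $(\delta')^{-1} F'_1 \delta' \subset F'_2$ and $\gamma'_1 \delta' F'_2 = \gamma'_2 F'_2$ (so that the underlying equivariant map sends $1 F'_1 \mapsto \delta' F'_2$); I send this morphism to the equivariant map $\Gamma/h(F'_1) \to \Gamma/h(F'_2)$ determined by $h(\delta')$, viewed as a morphism $h(\gamma'_1) h(F'_1) \to h(\gamma'_2) h(F'_2)$. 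Well-definedness and functoriality then follow formally: $h(\delta')^{-1} h(F'_1) h(\delta') \subset h(F'_2)$ and the required coset identity both hold because $h$ is a homomorphism; identity morphisms ($\delta' = 1$) are sent to identities; and composition is preserved because $h(\delta'_1 \delta'_2) = h(\delta'_1) h(\delta'_2)$.

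Equivariance is immediate: under the identification of Lemma~\ref{E_F}, the action $\ul{\gamma'}$ sends $\nu' F' \mapsto \gamma' \nu' F'$ on objects and fixes the underlying equivariant map on morphisms, so both $\ul{h(\gamma')} \circ h_\F$ and $h_\F \circ \ul{\gamma'}$ send an object $\nu' F'$ to $h(\gamma' \nu')\, h(F') = h(\gamma')\, h(\nu')\, h(F')$ and agree on morphisms. The composition rule $(h\circ k)_\F = h_\F \circ k_{h^{-1}(\F)}$ reduces to the observation $(h\circ k)(\gamma'') = h(k(\gamma''))$ applied to objects and to morphism representatives, together with the identity $k^{-1}(h^{-1}(\F)) = (h\circ k)^{-1}(\F)$ which ensures that the two composites have the same domain.

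No step presents a real obstacle; the task is essentially bookkeeping, and the mild annoyance is only the notation for cosets of varying subgroups. A more conceptual route would begin by observing that $h$ induces a functor $\Orb_{h^{-1}(\F)}(\Gamma') \to \Orb_\F(\Gamma)$ sending $\Gamma'/F' \mapsto \Gamma/h(F')$, together with a compatible natural transformation between the forgetful-to-$\Sets$ functors $\U_{h^{-1}(\F)}$ and $\U_\F$; the proposition would then fall out of the naturality of the Grothendieck wreath product, trading explicit coset manipulation for a little categorical machinery.
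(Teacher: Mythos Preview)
Your argument is correct: defining $h_\F$ directly via the coset description of Lemma~\ref{E_F} and checking everything by hand works, and nothing is missing.

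It is worth noting, though, that the paper takes precisely the ``more conceptual route'' you sketch in your final paragraph. There the author first records three small lemmas on wreath products (that a natural transformation $\eta\co F\Rightarrow G$ induces a functor $\ul{\eta}\co\C\wr F\to\C\wr G$, that a functor $\beta\co\B\to\C$ induces $\ol{\beta}\co\B\wr(F\circ\beta)\to\C\wr F$, and that these two constructions commute), and then defines $h_\F$ as the composite $\ol{h_*}\circ\ul{\eta_{h,\F}}$. Equivariance and the composition law $(h\circ k)_\F=h_\F\circ k_{h^{-1}\F}$ are then consequences of those abstract lemmas together with the identities $h_*\circ k_*=(h\circ k)_*$ and $(\eta_{h,\F}\circ k_*)\circ\eta_{k,h^{-1}(\F)}=\eta_{h\circ k,\F}$. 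Your explicit approach has the virtue of being entirely self-contained and making the functor visible at the level of cosets; the paper's approach isolates the reusable categorical content and makes the composition law fall out of general naturality rather than a pointwise check. Both arrive at literally the same functor.
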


The proof relies on the following elementary lemmas, the first of which is noted in~\cite[\S 1]{Thomason-thesis}.   

\begin{lemma}\label{func1} A natural transformation $\eta\co F\implies G$ of functors $F, G\co \C\to \Sets$ induces a functor $$\ul{\eta} \co \C\wr F \to \C \wr G,$$ defined on objects by $\ul{\eta}(C, x) = (C, \eta_C (x))$ and on morphisms by $\ul{\eta} (\phi) = \phi$.

Given another natural transformation $\eta' \co G\implies H$, we have $ \ul{\eta' \circ \eta}=\ul{\eta'} \circ \ul{\eta}$.
\end{lemma}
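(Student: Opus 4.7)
The plan is to verify the functoriality of $\underline{\eta}$ by unwinding the definitions and using naturality of $\eta$ in exactly the one spot where it is needed. The assignment on objects is given, so the content is: (i) checking that $\underline{\eta}$ actually sends morphisms of $\mathcal{C}\wr F$ to morphisms of $\mathcal{C}\wr G$ (i.e.\ it is well-defined), (ii) checking preservation of composition and identities, and (iii) verifying the compatibility statement $\underline{\eta'\circ\eta}=\underline{\eta'}\circ\underline{\eta}$.

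For (i), recall that a morphism $(\phi,x)\co (C,x)\to (D,y)$ in $\mathcal{C}\wr F$ is a morphism $\phi\in\mathrm{Mor}_{\mathcal{C}}(C,D)$ satisfying $F(\phi)(x)=y$. We need the proposed image $\underline{\eta}(\phi,x)=(\phi,\eta_C(x))$ to be a morphism $(C,\eta_C(x))\to (D,\eta_D(y))$ in $\mathcal{C}\wr G$; that is, we need $G(\phi)(\eta_C(x))=\eta_D(y)$. This is precisely where naturality of $\eta$ enters: the square $\eta_D\circ F(\phi)=G(\phi)\circ\eta_C$, evaluated at $x$, gives $G(\phi)(\eta_C(x))=\eta_D(F(\phi)(x))=\eta_D(y)$. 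This is the only substantive point in the proof.

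For (ii), composition in the wreath product is $(\psi,y)\circ(\phi,x)=(\psi\circ\phi,x)$, so
\[
\underline{\eta}\bigl((\psi,y)\circ(\phi,x)\bigr)=(\psi\circ\phi,\eta_C(x))=(\psi,\eta_D(y))\circ(\phi,\eta_C(x))=\underline{\eta}(\psi,y)\circ\underline{\eta}(\phi,x),
\]
and $\underline{\eta}(\mathrm{id}_C,x)=(\mathrm{id}_C,\eta_C(x))=\mathrm{id}_{(C,\eta_C(x))}$, so $\underline{\eta}$ is a functor.

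For (iii), both functors agree on objects since $(\eta'\circ\eta)_C=\eta'_C\circ\eta_C$ by definition of vertical composition of natural transformations, giving $\underline{\eta'\circ\eta}(C,x)=(C,\eta'_C(\eta_C(x)))=\underline{\eta'}(\underline{\eta}(C,x))$; and both act as the identity on the underlying morphism $\phi$, so they agree on morphisms as well. The only potentially tricky step is (i), and even there the required identity is immediate from the naturality square; the rest is bookkeeping.
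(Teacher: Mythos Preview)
Your proof is correct. The paper itself does not supply a proof of this lemma; it is stated as an elementary fact (with a pointer to Thomason) and used without further justification, so your direct verification via the naturality square is exactly what is needed to fill in the details.
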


\begin{lemma}\label{func2} Functors $\B\srt{\beta} \C \srt{F} \Sets$ induce a functor 
$$\ol{\beta} \co \B \wr (F\circ \beta) \maps \C \wr F$$
defined on objects by $\ol{\beta} (B, x) = (\beta (B), x)$ and on morphisms by $\ol{\beta} (\phi) = \beta(\phi)$.

Given $\A \srt{\alpha} \B\srt{\beta} \C \srt{F} \Sets$, we have $\ol{\beta \circ \alpha} = \ol{\beta} \circ \ol{\alpha}$.
\end{lemma}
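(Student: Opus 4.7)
The plan is to build $h_\F$ by combining Lemmas~\ref{func1} and~\ref{func2} applied to two intermediate pieces of data extracted from $h$. First, I observe that $h$ induces a functor
$$h_* \co \Orb_{h^{-1}(\F)}(\Gamma') \maps \Orb_\F(\Gamma),$$
sending $\Gamma'/G \mapsto \Gamma/h(G)$ (which lies in $\Orb_\F(\Gamma)$ precisely because $G\in h^{-1}(\F)$) and a morphism represented by $[\gamma']$ to $[h(\gamma')]$. The required containment $h(\gamma')^{-1}h(G)h(\gamma')\leqs h(G')$ is immediate from $h$ being a homomorphism; the assignment on morphisms is independent of the representative because $h(G')$ absorbs $h(g')$ for $g'\in G'$; and composition is preserved since $h(\gamma'\nu')=h(\gamma')h(\nu')$.

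Second, I define a natural transformation $\eta\co \U_{h^{-1}(\F)}\Rightarrow \U_\F\circ h_*$ of functors $\Orb_{h^{-1}(\F)}(\Gamma')\to\Sets$ with components $\eta_{\Gamma'/G}(\mu' G)=h(\mu')h(G)$; naturality reduces to the identity that sends $\mu' G \mapsto \mu'\gamma' G'$ on the left is matched on the right by $h(\mu')h(G)\mapsto h(\mu')h(\gamma')h(G')$, which equals $\eta_{\Gamma'/G'}(\mu'\gamma' G')$. Applying Lemma~\ref{func2} (with $\beta=h_*$ and $F=\U_\F$) together with Lemma~\ref{func1} applied to $\eta$, I obtain functors
$$E_{h^{-1}(\F)}(\Gamma')\xmaps{\ul{\eta}} \Orb_{h^{-1}(\F)}(\Gamma')\wr(\U_\F\circ h_*)\xmaps{\ol{h_*}} E_\F\Gamma,$$
and set $h_\F:=\ol{h_*}\circ\ul{\eta}$. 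Unwinding the constructions in Lemmas~\ref{func1} and~\ref{func2}, $h_\F$ sends an object $(\Gamma'/G,\mu' G)$ to $(\Gamma/h(G),h(\mu')h(G))$ and a morphism $\phi$ to $h_*(\phi)$.

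Both remaining assertions then follow by direct calculation on objects and morphisms; no substantive obstacle arises beyond bookkeeping. For equivariance, $\ul{\gamma'}$ and $\ul{h(\gamma')}$ fix morphisms, so on morphisms both composites act as $h_*$; on an object $(\Gamma'/G,\mu' G)$ each composite yields $(\Gamma/h(G),h(\gamma'\mu')h(G))=(\Gamma/h(G),h(\gamma')h(\mu')h(G))$. For the composition law, one first checks $k^{-1}(h^{-1}(\F))=(h\circ k)^{-1}(\F)$, so the source categories match; then on an object $(\Gamma''/G'',\mu'' G'')$ both $(h\circ k)_\F$ and $h_\F\circ k_{h^{-1}(\F)}$ produce $(\Gamma/(hk)(G''),(hk)(\mu'')(hk)(G''))$, and on morphisms both apply $(h\circ k)_*=h_*\circ k_*$. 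The only point to verify is that the intermediate wreath product bookkeeping is consistent, which follows from the compatibility $\eta_{hk,\Gamma''/G''}=\eta_{h,\,k_*(\Gamma''/G'')}\circ\eta_{k,\Gamma''/G''}$, a direct consequence of $h(k(\mu''))=(hk)(\mu'')$.
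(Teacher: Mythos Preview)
Your proposal does not address the stated lemma at all. Lemma~\ref{func2} is the elementary assertion that a functor $\beta\co\B\to\C$ together with $F\co\C\to\Sets$ induces a functor $\ol{\beta}\co\B\wr(F\circ\beta)\to\C\wr F$, and that this construction is compatible with composition of functors. What you have written is instead a proof of Proposition~\ref{functorial}: you construct $h_\F$, verify its equivariance, and check the composition law $(h\circ k)_\F = h_\F\circ k_{h^{-1}(\F)}$. Indeed, your argument explicitly \emph{invokes} Lemma~\ref{func2} (in the sentence ``Applying Lemma~\ref{func2} (with $\beta=h_*$ and $F=\U_\F$) \ldots''), so it cannot serve as a proof of that lemma.

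The paper does not supply a proof of Lemma~\ref{func2} because it is a routine verification: for $(B,x)\in\B\wr(F\circ\beta)$ one has $x\in F(\beta(B))$, so $(\beta(B),x)\in\C\wr F$; a morphism $(\phi,x)\co(B,x)\to(B',x')$ satisfies $F(\beta(\phi))(x)=x'$, so $(\beta(\phi),x)$ is a morphism in $\C\wr F$; functoriality of $\ol{\beta}$ follows from that of $\beta$; and $\ol{\beta\circ\alpha}=\ol{\beta}\circ\ol{\alpha}$ is immediate on objects and morphisms. If your intent was to prove Proposition~\ref{functorial}, then your argument is correct and essentially matches the paper's, the only difference being that you verify the composition law by direct inspection on objects and morphisms, whereas the paper derives it formally from Lemmas~\ref{func1}, \ref{func2}, and~\ref{func3}.
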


\begin{lemma}\label{func3}
The constructions in Lemma~\ref{func1} and Lemma~\ref{func2} commute, in the following sense: given functors
$$\xymatrix{ \A \ar[r]^-\alpha & \B \ar@<0.5ex>[r]^-F   \ar@<-0.5ex>[r]_-G
		&	\Sets
}$$
and a natural transformation $\eta \co F\implies G$, the diagram
$$\xymatrix{ \A \wr (F\circ \alpha) \ar[r]^-{\ol{\alpha}} \ar[d]^-{\ul{\eta \circ \alpha}}
					& \B\wr F \ar[d]^-{\ul{\eta}} \\
		 \A\wr (G \circ \alpha) \ar[r]^-{\ol{\alpha}} & \B \wr G
		 }$$
commutes $($strictly$)$, where $\eta \circ \alpha\co F\circ \alpha \implies G\circ \alpha$ is the natural transformation $(\eta\circ \alpha)_A = \eta_{\alpha (A)}$.
\end{lemma}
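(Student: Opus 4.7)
The plan is to verify commutativity of the square by unpacking the definitions of $\ul{\eta}$, $\ol{\alpha}$, and $\eta \circ \alpha$ on objects and morphisms separately. Since both composites are functors between the same wreath product categories, this will suffice.

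First I would check the object level. A typical object of $\A \wr (F\circ \alpha)$ is a pair $(A, x)$ with $A \in \Ob (\A)$ and $x \in F(\alpha(A))$. Applying $\ul{\eta} \circ \ol{\alpha}$ gives
\[
(A, x) \longmapsto (\alpha(A), x) \longmapsto (\alpha(A), \eta_{\alpha(A)}(x)),
\]
while $\ol{\alpha} \circ \ul{\eta \circ \alpha}$ gives
\[
(A, x) \longmapsto (A, (\eta\circ \alpha)_A(x)) = (A, \eta_{\alpha(A)}(x)) \longmapsto (\alpha(A), \eta_{\alpha(A)}(x)),
\]
so the two composites agree on objects, using only the definition of $\eta\circ \alpha$ given in the statement.

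Next I would check morphisms. A morphism $(A, x) \to (A', x')$ in $\A \wr (F\circ \alpha)$ has the form $(\phi, x)$ with $\phi \co A \to A'$ in $\A$ and $F(\alpha(\phi))(x) = x'$. By Lemmas~\ref{func1} and~\ref{func2}, $\ul{\eta}$ acts as the identity on the $\phi$-component while $\ol{\alpha}$ sends $\phi$ to $\alpha(\phi)$. Thus both composites send $(\phi, x)$ to $\alpha(\phi)$, viewed as a morphism between the appropriate objects in $\B \wr G$. The only thing to verify is that this morphism is well-defined in both cases, i.e.\ that the source and target objects match — but that is precisely what the object-level calculation above guarantees.

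The argument involves no real obstacle; the only subtle point is keeping track of which set $x$ lives in at each stage (namely $F(\alpha(A))$, $G(\alpha(A))$, or their images under the appropriate map), and confirming that the naturality square for $\eta$ at the morphism $\alpha(\phi)$ is consistent with the definitions of morphisms in both wreath product categories. Since all such compatibilities hold on the nose, the diagram commutes strictly as claimed.
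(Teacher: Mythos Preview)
Your verification is correct: unpacking the definitions on objects and on morphisms and checking that both composites yield $(\alpha(A),\eta_{\alpha(A)}(x))$ and $(\alpha(\phi),\eta_{\alpha(A)}(x))$ respectively is exactly what is needed, and there is nothing more to it. The paper in fact states this lemma without proof, treating it as an elementary check; your write-up simply makes that check explicit.
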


\begin{proof}[Proof of Proposition~\ref{functorial}]
To define $h_\F$, first note that $h$ induces a functor
$$\Orb_{h^{-1} (\F)} (\Gamma') \srm{h_*} \Orb_\F (\Gamma),$$
defined by $h_* (\Gamma'/G) = \Gamma/h(G)$ and $h_* (\gamma'G) = h(\gamma')h(G)$ $($using the notation from Lemma~\ref{morphisms}$)$.  The diagram
$$\xymatrix{ \Orb_{h^{-1} (\F)} (\Gamma') \ar[dr]_-{\U_{h^{-1} (\F)}} \ar[rr]^{h_*} & & \Orb_\F (\Gamma) \ar[dl]^-{\U_\F}\\
& \Sets
}$$
commutes up to the natural transformation 
$$\eta_{h, \F} \co \U_{ h^{-1} (\F)} \implies \U_\F \circ h_*$$
whose value on an object $\Gamma'/G$ in $\Orb_{h^{-1} (\F)} (\Gamma')$ is the function $\Gamma'/G\to \Gamma/h(G)$ given by 
$\gamma' G\goesto h(\gamma') h(G)$.  The desired functor $h_\F$ is the composite
$$\Orb_{h^{-1} (\F)} (\Gamma') \wr \U_{ h^{-1} (\F)} \xmaps{\ul{\eta_{h, \F}}} \Orb_{h^{-1} (\F)} (\Gamma') \wr \left(\U_{ h^{-1} (\F)} \circ h_*\right) \xmaps{\ol{h_*}} \Orb_\F (\Gamma) \wr \U_\F,$$
where $\ul{\eta_{h, \F}}$ is the functor from Lemma~\ref{func1} and $\ol{h_*}$ is the functor from Lemma~\ref{func2}. 

The category $ \Orb_{h^{-1} (\F)} (\Gamma') \wr \left(\U_\F\circ h_*\right) $ admits an action of $\Gamma$, defined analogously to the action of $\Gamma'$ on $\Orb_{h^{-1} (\F)} (\Gamma') \wr \U_{ h^{-1} (\F)}$. This $\Gamma$--action extends, via the homomorphism $h\co \Gamma'\to \Gamma$, to an action of $\Gamma'$ on $\Orb_{h^{-1} (\F)} (\Gamma') \wr \left(\U_{ h^{-1} (\F)}\circ h_*\right)$, and one checks that $\ul{\eta_{h, \F}}$ is $\Gamma'$--equivariant, while $\ol{h_*}$ is $\Gamma$--equivariant. The stated equivariance property for $h_\F$ now follows.

Applying Lemma~\ref{func3} followed by the naturality statements in Lemmas~\ref{func1} and \ref{func2} gives
\begin{eqnarray*}
h_\F\circ k_{h^{-1}(\F)} &= &\ol{h_*} \circ \ul{\eta_{h, \F}}  \circ \ol{k_*} \circ \ul{\eta_{k, h^{-1} (\F)}} \\
&=& \ol{h_*} \circ  \ol{k_*} \circ  \ul{\eta_{h, \F} \circ k_*}\circ  \ul{\eta_{k, h^{-1} (\F)}}\\
&= &\ol{h_* \circ k_*} \circ \ul{ (\eta_{h, \F} \circ k_*)\circ  \eta_{k, h^{-1} (\F)}}.
\end{eqnarray*}
Since $h_* \circ k_* = (h\circ k)_*$ and  
$(\eta_{h, \F} \circ k_*)\circ  \eta_{k, h^{-1} (\F)} = \eta_{h\circ k, \F}$, we find that 
$$h_\F\circ k_{h^{-1}(\F)} = (h\circ k)_\F,$$ 
as desired.
\end{proof}

\begin{remark} By Equation $(\ref{nv})$, the model for classifying spaces considered here satisfies
$$E_\F \Gamma \isom \hocolim_{\Orb_\F (\Gamma)} \U_\F.$$
Davis and L\"{u}ck~\cite[Lemma 7.6(2)]{Davis-Luck} give another proof that $\E_\F \Gamma$ can be described as the homotopy colimit of $\U_\F$ $($which they denote by $\nabla$$)$ over the orbit category.
\end{remark}

We note an interesting corollary of Proposition~\ref{classifying-space}.  Recall that $\Fin$ is the family of finite subgroups, so $E_\Fin (\Gamma) = \ul{E} \Gamma$ is the \e{classifying space for proper actions}.

\begin{corollary}  Let $X$ be a connected CW complex.  Then there exists a discrete group $\Gamma$ such that $|\Orb_{\Fin} (\Gamma)|$ is homotopy equivalent to $X$.
\end{corollary}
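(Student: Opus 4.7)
The plan is to combine Proposition~\ref{classifying-space} with a realization theorem of Leary and Nucinkis~\cite{Leary-Nucinkis}, which asserts that every connected CW complex $X$ is homotopy equivalent to the orbit space $Y/\Gamma$ for some discrete group $\Gamma$ and some model $Y$ of the classifying space $\ul{E}\Gamma = \E_{\Fin}\Gamma$ for proper actions. This is precisely the deep input flagged in the introduction, and it does all of the hard work; the remainder of the argument is a formal unwinding.

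Granted the Leary-Nucinkis statement, the first step is to invoke Proposition~\ref{classifying-space} to produce the particular model $|E_{\Fin}\Gamma|$ for $\ul{E}\Gamma$, together with the homeomorphism $|E_{\Fin}\Gamma|/\Gamma \homeo |\Orb_{\Fin}(\Gamma)|$. Because any two classifying spaces for the family $\Fin$ are $\Gamma$-homotopy equivalent (as recalled in the introduction, citing~\cite[Section 1]{Luck-survey}), one obtains a $\Gamma$-homotopy equivalence $|E_{\Fin}\Gamma| \srm{\heq} Y$.

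The second step is to push this equivariant equivalence down to the orbit spaces. Since $\Gamma$-homotopy equivalences between $\Gamma$-CW complexes induce ordinary homotopy equivalences on quotients (a standard fact from equivariant CW theory, applied cell-by-cell using that $(Y \times I)/\Gamma \homeo (Y/\Gamma) \times I$ for a trivial $\Gamma$-action on $I$), one concludes
$$|\Orb_{\Fin}(\Gamma)| \homeo |E_{\Fin}\Gamma|/\Gamma \heq Y/\Gamma \heq X,$$
which is the desired homotopy equivalence.

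The only substantive content is the Leary-Nucinkis realization theorem; everything else is a direct consequence of Proposition~\ref{classifying-space} together with the uniqueness (up to $\Gamma$-homotopy equivalence) of classifying spaces for $\Fin$. Accordingly, I do not anticipate a genuine obstacle beyond correctly citing that theorem.
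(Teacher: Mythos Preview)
Your proposal is correct and follows essentially the same argument as the paper: invoke Leary--Nucinkis to obtain $\Gamma$ and a model $Y$ for $\E_{\Fin}\Gamma$ with $Y/\Gamma\heq X$, use Proposition~\ref{classifying-space} to identify $|E_{\Fin}\Gamma|/\Gamma$ with $|\Orb_{\Fin}(\Gamma)|$, and then pass the $\Gamma$--homotopy equivalence $|E_{\Fin}\Gamma|\heq Y$ to quotients. The only difference is that you spell out why a $\Gamma$--homotopy equivalence descends to a homotopy equivalence of orbit spaces, which the paper leaves implicit.
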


\begin{proof} The main result of \cite{Leary-Nucinkis} states that for every connected CW complex $X$, there exists a group $\Gamma_X$ and a classifying space $E_X$ for the family $\Fin = \Fin(\Gamma_X)$
such that $E_X/\Gamma_X$ is homotopy equivalent to $X$.
By Proposition~\ref{classifying-space}, $|E_\Fin (\Gamma_X)|$ is also a classifying space for $\Fin$, so there exists a $\Gamma_X$--homotopy equivalence 
 $$|E_\Fin (\Gamma_X)| \srm{\heq} E_X.$$
It follows that the induced mapping 
$$|E_\Fin (\Gamma_X)|/\Gamma_X \homeo |\Orb_\Fin (\Gamma_X)| \maps E_X/\Gamma_X\heq X$$
is also a homotopy equivalence.
\end{proof}


\section{Generalized homotopy fixed points via the orbit category}$\label{fixed-sets}$

Generalized homotopy fixed point sets were  introduced by Rosenthal \cite{Rosenthal}.
We show that these spaces can be described as homotopy limits over the orbit category.
The results in this section are well-known when $\F$ is the trivial family $\mathds{1}$.

\begin{definition} $\label{gfs}$ Given a $($left$)$ $\Gamma$--space $X$, the \emph{generalized homotopy fixed point set} with respect to the family $\F$ is the equivariant mapping space 
$$X^{h_\F \Gamma} := \Map^\Gamma (\E_\F \Gamma, X).$$  
\end{definition}

Since all models for $\E_\F \Gamma$ are related by $\Gamma$--homotopy equivalences, 
$X^{h_\F \Gamma}$ is well-defined up to homotopy.  
We will work in the category of compactly generated spaces, 
so we will always replace the natural topology on a space by the associated compactly generated topology.  In particular, $X^{h_\F \Gamma}$ has the compactly generated topology associated to the subspace topology inherited from the full mapping space.  
We   assume familiarity with the basic results on compactly generated spaces described in~\cite{Steenrod-convenient}.

We review Goodwillie's viewpoint on homotopy limits from~\cite{Goodwillie-calculus-II}.  
For a small category $\C$ and an object $C\in \Ob (\C)$, the over-category $\C \down C$ has as its objects the set of arrows $D\xmaps{\phi} C$ in $\C$, and morphisms 
in $\C\down C$ from $D\xmaps{\phi} C$ to $D'\xmaps{\phi'} C$
are  arrows $D\xmaps{\psi} D'$ in $\C$ such that $\phi' \circ \psi = \phi$.
Given a functor $\ul{X}\co \C \to \Top$, the homotopy limit
$$\holim_\C \ul{X}$$
can be described as the space of all \e{natural} collections of maps $f_C\co |\C \down C| \to\ul{X}(C)$,
topologized using the (compactly generated topology associated to the) subspace topology from the product space 
$$\Product_{C\in \Ob (\C)} \Map \left(|\C\down C|, \ul{X} (C)\right).$$
A collection of maps $|\C \down C| \to \ul{X}(C)$ is natural if 
for each $\alpha: C\to C'$, the diagram
$$\xymatrix{ |\C \down C| \ar[d]^{\ul{\alpha}} \ar[r]^{f_C} 
					& \ul{X}(C)  \ar[d]^{\ul{X} (\alpha)}  \\
		    |\C\down C'| \ar[r]^{f_{C'}} & \ul{X} (C')
		}
$$
commutes, where $\ul{\alpha}: \C \down C \to \C \down C'$ is induced by composition with $\alpha$.

Given a $\Gamma$--space $X$, the fixed point sets form a diagram (that is, a functor)
$$X^- : \Orb_\F (\Gamma)^\op \maps \Top.$$
On objects, this diagram is defined by $X^- (\Gamma/F) = X^F \isom \Map^\Gamma (\Gamma/F, X)$.  The mapping space is contravariantly functorial in $\Gamma/F$, which defines $X^-$ on morphisms.  Explicitly, if $\phi = [\gamma]: \Gamma/F \to \Gamma/G$ is an equivariant map, then 
$X^-(\phi) (x) = \gamma\cdot x$.

In what follows, it will be useful to note the isomorphism
$$(\C^\op \down C)^\op \isom C\down \C.$$

\begin{theorem}$\label{holim}$ If $X$ is a $\Gamma$--space, then there is a homeomorphism 
$$\holim_{\Orb_\F (\Gamma)^\op} X^- \homeo \Map^\Gamma (|E_\F \Gamma|, X).$$
\end{theorem}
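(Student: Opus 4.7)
The plan is to realize $|E_\F \Gamma|$ as a homotopy colimit over $\Orb_\F(\Gamma)$ and then invoke the standard exchange
\[
\Map^\Gamma\bigl(\hocolim_{\C} F,\, X\bigr) \homeo \holim_{\C^\op} \Map^\Gamma\bigl(F(-),\, X\bigr)
\]
for a functor $F\colon \C \to \Gamma\textrm{--}\Top$, together with the natural homeomorphism $\Map^\Gamma(\Gamma/F, X) \homeo X^F$ (evaluation at $1F$, valid because $\Gamma/F$ is a transitive $\Gamma$--set).

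By equation~(\ref{nv}) there is a $\Gamma$--equivariant isomorphism of simplicial sets $\nv(E_\F \Gamma) \iso \hocolim_{\Orb_\F(\Gamma)} \U_\F$. Since realization commutes with colimits and with the functor $\Delta^n \times (-)$ in compactly generated spaces, this gives a $\Gamma$--homeomorphism between $|E_\F\Gamma|$ and the topological bar construction, a quotient of $\coprod_n \Delta^n \times \coprod_c \U_\F(s(c))$ with the inner coproduct indexed by $n$--simplices $c = (\Gamma/F_0 \to \cdots \to \Gamma/F_n)$ of $\nv\Orb_\F(\Gamma)$ and $s(c) = \Gamma/F_0$. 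Applying $\Map^\Gamma(-,X)$, using that this functor carries coproducts to products and carries $\Delta^n \times A$ to $\Map(\Delta^n, \Map^\Gamma(A, X))$ in compactly generated spaces, and then plugging in $\Map^\Gamma(\Gamma/F_0, X) \homeo X^{F_0}$, I express $\Map^\Gamma(|E_\F\Gamma|, X)$ as the equalizer of a pair of maps between products of the form $\prod_c \Map(\Delta^n, X^{s(c)})$.

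To match this equalizer with $\holim_{\Orb_\F(\Gamma)^\op} X^-$ in Goodwillie's description, I use the isomorphism $(\C^\op \down C)^\op \iso C\down \C$ recalled before the theorem to obtain a homeomorphism $|\Orb_\F(\Gamma)^\op \down \Gamma/F| \homeo |\Gamma/F \down \Orb_\F(\Gamma)|$, whose $n$--simplices are chains $\Gamma/F \to \Gamma/F_0 \to \cdots \to \Gamma/F_n$ in $\Orb_\F(\Gamma)$. Under this identification, a natural family of maps $f_{\Gamma/F}\colon |\Orb_\F(\Gamma)^\op \down \Gamma/F|\to X^F$ is the same data as a system of maps $\Delta^n \to X^{F_0}$, one for each chain, and Goodwillie's naturality square translates into the $d_0$--compatibility condition coming from the bar construction. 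The remaining face and degeneracy identifications agree automatically because they act trivially on the coordinate $\U_\F(s(c))$. The main obstacle is this final step: tracking the contravariance carefully, so that the $d_0$ face operator (which drops the initial object and composes) matches the naturality square for $X^-$ as a functor on $\Orb_\F(\Gamma)^\op$. Once this bookkeeping is in place, all identifications are natural in $X$ and respect the compactly generated topologies, producing the asserted homeomorphism.
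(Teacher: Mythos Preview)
Your approach is sound and takes a genuinely different route from the paper's. The paper builds the homeomorphism explicitly: it defines functors $j_F\colon \Gamma/F \downarrow \Orb_\F(\Gamma) \to (E_\F\Gamma)^F$, sends an equivariant $\Phi\colon |E_\F\Gamma|\to X$ to the family $\Phi_F := \Phi^F \circ |j_F|$, verifies naturality of this family directly from equivariance of $\Phi$, and then checks bijectivity and bicontinuity simplex by simplex. Your argument is more structural: you realize $|E_\F\Gamma|$ as $\hocolim_{\Orb_\F(\Gamma)} \U_\F$ (which the paper records separately as a remark but does not exploit in the proof), apply the general duality $\Map^\Gamma(\hocolim_\C F, X) \homeo \holim_{\C^\op} \Map^\Gamma(F(-), X)$, and finish with $\Map^\Gamma(\Gamma/F, X) \homeo X^F$. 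The paper's route is more self-contained and elementary; yours is conceptually cleaner and explains \emph{why} the result holds, at the price of importing the Bousfield--Kan machinery.

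One point deserves more care than your sketch gives it. In Goodwillie's model a point of the homotopy limit is a natural family $f_{\Gamma/F}\colon |\Gamma/F\downarrow \Orb_\F(\Gamma)|\to X^F$, so the $n$--simplices of the source are chains $\Gamma/F \to \Gamma/F_0 \to \cdots \to \Gamma/F_n$ and the target is $X^F$, not $X^{F_0}$. Passing between this and your bar/Tot description (where the data are maps $\Delta^n\to X^{F_0}$ indexed by chains $\Gamma/F_0\to\cdots\to\Gamma/F_n$) requires using the naturality condition to absorb the initial arrow $\Gamma/F\to\Gamma/F_0$; this is exactly the standard comparison between the Bousfield--Kan and Goodwillie models of $\holim$, but it is a genuine step rather than something that falls out of the $d_0$ observation alone. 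Your remark that only $d_0$ moves the $\U_\F$--coordinate is correct and is indeed the core of the identification, but the full matching still needs this extra bookkeeping.
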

\begin{proof}  
Given a $\Gamma$--equivariant map $\Phi: |E_\F \Gamma| \to X$, we need to define a natural collection of maps 
$$\Phi_F : |\Orb_\F (\Gamma)^\op \down \Gamma/F| \maps X^F$$
for $F\in \F$.  These maps are defined via functors 
$$j_F : \left(\Orb_\F (\Gamma)^\op \down \Gamma/F\right)^\op \isom \Gamma/F \down \Orb_\F (\Gamma) \maps (E_\F \Gamma)^F.$$
An object in $\Gamma/F \down \Orb_\F (\Gamma)$ is an equivariant map $\phi: \Gamma/F \to \Gamma/G$, and we define 
$$j_F (\phi) = (\Gamma/G, \phi(1F)),$$
which is an object of $(E_\F \Gamma)^F$ since $\phi(1F) \in (\Gamma/G)^F$.
Morphisms 
$$\left(\phi: \Gamma/F \to \Gamma/G\right) \maps \left(\psi: \Gamma/F \to \Gamma/H\right)$$
in $\Gamma/F \down \Orb_\F (\Gamma)$
are equivariant maps $\tau: \Gamma/G\to \Gamma/H$ such that $\tau\circ \phi = \psi$.  
Viewing $\tau$ as a morphism
$(\Gamma/G,  \phi(1F)) \to (\Gamma/H, \psi (1F))$ 
in $E_\F \Gamma$, we define $j_F (\tau) = \tau$.
All diagrams in $E_\F \Gamma$ commute (Remark~\ref{morphisms-rmk}), so
$j_F$ is a functor.

We claim that the family of maps 
$$\Phi_F := \Phi^F \circ |j_F| :  |\Orb_\F (\Gamma)^\op \down \Gamma/F| \isom |\left( \Orb_\F (\Gamma)^\op \down \Gamma/F\right)^\op| \maps X^F$$
is natural, where $\Phi^F$ is the restriction of $\Phi$ to $F$--fixed point sets. 
It suffices to check that for every morphism
$\alpha \co \Gamma/F' \to \Gamma/F$ in $\Orb_\F \Gamma$, the diagram
$$\xymatrix{ |\Orb_\F (\Gamma)^\op \down \Gamma/F| \ar[r]^-{|j_F|} \ar[d]^{|\ul{\alpha}|}
					& | (E_\F \Gamma)^F | \ar[r]^-{\Phi^F} \ar[d]^-{|\ul{\gamma}|} 
					& X^F \ar[d]^{X^-(\alpha)} \\
		 |\Orb_\F (\Gamma)^\op \down \Gamma/F'| \ar[r]^-{|j_{F'}|} 										& | (E_\F \Gamma)^{F'} | \ar[r]^-{\Phi^{F'}} & X^{F'}
		}
$$ 
commutes, where $\gamma$ is chosen so that $\alpha (1F') = \gamma F$
and $\ul{\gamma}$ is the restriction of the functor on $E_\F \Gamma$ coming from the action of $\Gamma$ (by Lemmas~\ref{morphisms} and~\ref{fixed}, $\ul{\gamma}$ maps $(E_\F \Gamma)^F$ to $(E_\F \Gamma)^{F'}$).
The left-hand square commutes at the category level, and the right-hand square commutes by equivariance of $\Phi$.  
This gives us a function 
$$\Psi \co \Map^\Gamma (|E_\F \Gamma|, X) \maps \holim_{\Orb_\F (\Gamma)^\op} X^-,
$$
whose continuity follows from continuity of the restriction maps $\Phi \goesto \Phi^F$.

To prove $\Psi$ is a bijection, we must check that for each natural family of maps 
$$f_F :  |\Orb_\F (\Gamma)^\op \down \Gamma/F| \maps X^F,$$
there exists a unique equivariant map $f: |E_\F \Gamma| \to X$ such that $f \circ |j_F| = f_F$ for each $F\in \F$.
A simplex in the nerve of $E_\F \Gamma$ is represented by a chain of morphisms 
$$(\Gamma/F_0, \gamma F_0) \srm{\phi_1} (\Gamma/F_1, \phi_1 (\gamma F_0)) \srm{\phi_2} \cdots \srm{\phi_n}
	(\Gamma/F_n, \phi_n \circ \cdots \circ \phi_1 (\gamma F_0)).
$$
This simplex is the image, under the functor $j_{F_0}$, of the simplex in the nerve of 
$\Gamma/\gamma F_0 \gamma^{-1}\down \Orb_\F (\Gamma)$ corresponding to the diagram
$$\xymatrix{ & \Gamma/ (\gamma F_0 \gamma^{-1}) \ar[dl]_-{[\gamma]} \ar[d]^-{\phi_1 \circ [\gamma]} \ar[drr]^(.6){\,\,\,\,\,\,\,\,\phi_n \circ \cdots \circ \phi_1 \circ [\gamma]}\\
\Gamma/F_0 \ar[r]^{\phi_1} & \Gamma/F_1  \ar[r]^{\phi_2}  &\cdots \ar[r]^(.4){\phi_n} &\Gamma/F_n,
}
$$
where $[\gamma] = \gamma F_0$ in the notation from Lemma~\ref{morphisms}.
Thus if $f: |E_\F \Gamma| \to X$ satisfies $f \circ |j_F| = f_F$ for each $F\in \F$, then $f$ is completely determined by the maps $f_F$, and we just need to check that the prescribed values on each simplex in fact paste together to give an equivariant map $f: |E_\F \Gamma| \to X$.  Using naturality of the maps $f_F$, one  checks that our prescription for $f$ is well-defined on each simplex and equivariant, and it is immediate from the construction that the simplicial identities are respected.  Hence 
$\Psi$ is a bijection.  

Continuity of $\Psi^{-1}$
follows from continuity of its adjoint map
$$(\Psi^{-1})^\vee \co |E_\F \Gamma| \cross \holim_{\Orb_\F (\Gamma)^\op} X^-  \maps X,$$
which can be checked directly on each closed cell of 
$|E_\F \Gamma|$.
\end{proof}

The following corollary was first proven in Rosenthal \cite[Lemma 2.1]{Rosenthal-L} using a cell-by-cell argument.

\begin{corollary} Let $X$ and $Y$ be $\Gamma$--spaces, and let $\E_\F \Gamma$ be a classifying space for the family $\F$.
If $f: X\to Y$ is a $\Gamma$--equivariant map that induces a weak equivalence $X^F \xmaps{\heq} Y^F$ for each $F\in \F$, then 
the induced map
$$X^{h_\F \Gamma} \maps Y^{h_\F \Gamma}$$
is a weak equivalence as well.
\end{corollary}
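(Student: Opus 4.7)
The plan is to reduce the statement to the homotopy invariance of the homotopy limit construction described in Section~\ref{fixed-sets}, using Theorem~\ref{holim} as the bridge.

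First, by Theorem~\ref{holim} applied to both $X$ and $Y$, we have homeomorphisms
$$X^{h_\F \Gamma} \homeo \holim_{\Orb_\F (\Gamma)^\op} X^- \quad \text{and} \quad Y^{h_\F \Gamma} \homeo \holim_{\Orb_\F (\Gamma)^\op} Y^-.$$
Inspecting the proof of Theorem~\ref{holim}, the homeomorphism $\Psi$ is defined by post-composition with the restrictions $\Phi^F$, and hence is natural with respect to $\Gamma$--equivariant maps of the target. Thus the map $f_*\co X^{h_\F \Gamma} \maps Y^{h_\F \Gamma}$ corresponds, under these homeomorphisms, to the map of homotopy limits induced by the natural transformation $f^-\co X^- \implies Y^-$ of functors $\Orb_\F (\Gamma)^\op \maps \Top$ whose component at $\Gamma/F$ is the restriction $f^F\co X^F \maps Y^F$.

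Second, the hypothesis that each $f^F$ is a weak equivalence is precisely the statement that $f^-$ is a \emph{pointwise} weak equivalence of diagrams. It therefore suffices to verify that the homotopy limit construction reviewed before Theorem~\ref{holim} sends pointwise weak equivalences of $\Top$--valued diagrams to weak equivalences. This is the standard homotopy invariance property of Bousfield--Kan style homotopy limits, and in the present setting it can be verified directly from Goodwillie's explicit description: for each $F\in \F$, the over-category nerve $|\Orb_\F (\Gamma)^\op \down \Gamma/F|$ is a CW complex, so $\Map\bigl(|\Orb_\F (\Gamma)^\op \down \Gamma/F|, -\bigr)$ preserves weak equivalences in its target variable (since every space is fibrant and CW complexes are cofibrant in the Quillen model structure). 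The homotopy limit is then assembled from these mapping spaces as a subspace of their product cut out by naturality constraints indexed by the morphisms of $\Orb_\F (\Gamma)^\op$; a filtration of $\Orb_\F (\Gamma)^\op$ by skeleta of its nerve, together with the five lemma on the resulting Milnor-type $\lim^1$ exact sequences, shows that a pointwise weak equivalence induces a weak equivalence on homotopy limits.

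The main obstacle is strictly the last step, the homotopy invariance of the homotopy limit; once this general fact is invoked (or sketched as above), the corollary is an immediate consequence of Theorem~\ref{holim} and the naturality of its homeomorphism in equivariant maps of the target $\Gamma$--space.
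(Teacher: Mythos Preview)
Your proof is correct and follows the same approach as the paper: reduce via Theorem~\ref{holim} to a map of homotopy limits induced by a pointwise weak equivalence of diagrams, then invoke homotopy invariance of homotopy limits. The paper simply cites this last step as a ``general fact'' without the sketch you provide, and it adds one sentence you omit (reducing from an arbitrary model $\E_\F\Gamma$ to the categorical model $|E_\F\Gamma|$ via $\Gamma$--homotopy equivalence of classifying spaces), but otherwise the arguments are the same.
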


\begin{proof} Any two classifying spaces for $\F$ are $\Gamma$--homotopy equivalent, so we may use the categorical model $|E_\F \Gamma|$.  By Theorem \ref{holim}, it is enough to consider the map
$$\holim_{\Orb_\F (\Gamma)^\op} X^- \maps \holim_{\Orb_\F (\Gamma)^\op} Y^-$$
induced by $f$.  But the map of diagrams $X^- \to Y^-$ is a point-wise weak equivalence, so the result follows from the general fact that a point-wise weak equivalence of diagrams induces a weak equivalence between their homotopy limits.
\end{proof}

For applications to algebraic $K$--theory, it is more useful to have versions of these results for $\Omega$--spectra with (naive) $\Gamma$--actions, and in fact this is the context considered in~\cite{Rosenthal}.  Since homotopy limits and equivariant mapping spaces are formed level-wise, the results in this section immediately extend to $\Omega$--spectra.

\vspace{.15in}
We end with another example of how Theorem~\ref{holim}, together with general facts about homotopy limits, may be applied to study generalized homotopy fixed sets. 

\begin{proposition} \label{Syl-prop} 
Let $G$ be a finite group containing a unique (normal) Sylow $p$--subgroup $P$.  Let $W = G/P$ denote the Weyl group of $P$.  Then for any $G$--space $X$, 
the generalized homotopy fixed-set $X^{h_\mcP G}$ $($where $\mcP$ is the family of $p$--subgroups of $G$$)$ is weakly equivalent to the $($ordinary$)$ homotopy fixed set $(X^P)^{h W}$.
\end{proposition}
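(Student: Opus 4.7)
The plan is to produce a simple and explicit model for $\E_\mcP G$ on which the identification becomes transparent, rather than to compare homotopy limits over different orbit categories directly. Since $P$ is the unique Sylow $p$-subgroup of $G$, every $p$-subgroup of $G$ is contained in $P$, so the family $\mcP$ consists precisely of the subgroups of $P$.

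I claim that the ordinary free universal $W$-space $EW$, regarded as a $G$-space via the quotient $G\surjects W$, is a classifying space for $\mcP$. For any $Q\leqs G$ the action of $Q$ on $EW$ factors through its image $QP/P$ in $W$, so $(EW)^Q = (EW)^{QP/P}$. If $Q\leqs P$ then this image is trivial and the fixed set is all of $EW$, which is contractible; otherwise the image is a nontrivial subgroup of $W$, and since $W$ acts freely on $EW$ the fixed set is empty. Finally $EW$ inherits the structure of a $G$--CW complex with cells of type $G/P\cross D^n$, so it does indeed model $\E_\mcP G$.

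Given this model, the proposition becomes essentially formal. Since $P$ acts trivially on $EW$, any $G$-equivariant map $f\co EW\to X$ satisfies $p\cdot f(e) = f(p\cdot e) = f(e)$ for every $p\in P$ and $e\in EW$, so $f$ takes values in $X^P$. Conversely, any $W$-equivariant map $EW\to X^P$ composes with the inclusion $X^P\injects X$ to yield a $G$-equivariant map $EW\to X$, and the two constructions are mutually inverse. Since all models for $\E_\mcP G$ are $G$-homotopy equivalent, Definition~\ref{gfs} may be evaluated using $EW$, and one obtains a chain
$$X^{h_\mcP G} \heq \Map^G(EW, X) \isom \Map^W(EW, X^P) = (X^P)^{hW},$$
from which the proposition follows. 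The only genuinely conceptual step in this plan is the verification that $EW$ models $\E_\mcP G$; after that, the comparison is mechanical.
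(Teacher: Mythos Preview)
Your argument is correct. The key observation that $EW$, pulled back along $G\surjects W$, is a model for $\E_\mcP G$ is sound: the fixed-point computation is right (a subgroup $Q\leqs G$ has trivial image in $W$ if and only if $Q\leqs P$, which in turn holds if and only if $Q$ is a $p$--group), and the free $W$--CW structure on $EW$ pulls back to a $G$--CW structure with cells of type $G/P\times D^n$. The identification $\Map^G(EW,X)\cong \Map^W(EW,X^P)$ is then immediate.

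The paper takes a different route: it invokes Theorem~\ref{holim} to write $X^{h_\mcP G}$ as a homotopy limit over $\Orb_\mcP(G)^\op$, and then applies the Bousfield--Kan cofinality theorem to the inclusion of the full subcategory on the single object $G/P$, whose automorphism group is $W$. Your approach is more elementary and self-contained --- it bypasses both the holim description and cofinality, and makes the equivalence a homeomorphism rather than merely a weak equivalence. The paper's approach, on the other hand, is the natural one \emph{in context}: the proposition is there precisely to illustrate how Theorem~\ref{holim} reduces questions about generalized homotopy fixed points to standard manipulations of homotopy limits. Both arguments extend to the situation in the remark following the proposition (a family with a unique maximal element $N$; such an $N$ is automatically normal since the family is conjugation-closed).
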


\begin{proof}
We use the
the cofinality theorem for homotopy limits \cite[XI.9.2]{Bousfield-Kan}, which states that a functor $\mcG\co \C\to \D$ induces  weak equivalences $\holim_\D \ul{X} \maps \holim_\C   \ul{X}\circ \mcG$ (for all diagrams $\ul{X}$) if $\mcG$ is \e{left cofinal} in the sense that the fiber $\mcG\down D$ is contractible for every object $D\in \D$.  Here $\mcG \down D$ is the category in which objects are pairs $(C, \phi)$ with $\phi\co \mcG(C)\to D$, and in which morphisms from  $(C, \phi)$ to  $(C', \phi')$ are maps $\psi\co C\to C'$ such that $\phi'\circ \mcG(\psi) = \phi$.  Note the natural isomorphism
$$(\mcG^\op \down D)^\op \isom D\down \mcG,$$
where $\mcG^\op \co \C^\op \to \D^\op$ is the functor induced by $\mcG$ and $D\down \mcG$ is defined dually to $\mcG\down D$, so that objects in $D\down \mcG$ are pairs $(C, \phi)$ with $\phi\co D\to \mcG(C)$.

Let $\Orb_{P} (G)$ denote the full subcategory of  $\Orb_{\mcP} (G)$ on the single object $G/P$. 
The cofinality theorem  applies to the inclusion $i\co \Orb_{P} (G) \injects   \Orb_{\mcP} (G)$ (or, rather, to the induced functor $i^\op$ between the opposite categories): indeed, for every $p$--subgroup $Q< G$, the fiber $(i^\op \down G/Q)^\op \isom G/Q\down i$  is contractible because every morphism set in $G/Q\down i$ contains exactly one element.  
Theorem~\ref{holim} and cofinality now give a weak equivalence
$$X^{h_\mcP G} \heq \holim_{\Orb_{P} (G)^\op} X^-.$$
Since $\Orb_{P} (G)$ has a single object $G/P$ with automorphism group $W$, the homotopy limit on the right is exactly the homotopy fixed point set for $W$ acting on $X^P$.
\end{proof}

\begin{remark} 
The analogous statement holds $($with the same proof$)$ for any family $\F$ containing a unique maximal element under inclusion.
\end{remark}

\def\cprime{$'$}


\begin{thebibliography}{10}

\bibitem{Babson-Kozlov-gp-actions}
Eric Babson and Dmitry~N. Kozlov.
\newblock Group actions on posets.
\newblock {\em J. Algebra}, 285(2):439--450, 2005.

\bibitem{Bartels-Rosenthal}
Arthur Bartels and David Rosenthal.
\newblock On the {$K$}-theory of groups with finite asymptotic dimension.
\newblock {\em J. Reine Angew. Math.}, 612:35--57, 2007.

\bibitem{Bartels-domain}
Arthur~C. Bartels.
\newblock On the domain of the assembly map in algebraic {$K$}-theory.
\newblock {\em Algebr. Geom. Topol.}, 3:1037--1050, 2003.

\bibitem{Bousfield-Kan}
A.~K. Bousfield and D.~M. Kan.
\newblock {\em Homotopy limits, completions and localizations}.
\newblock Lecture Notes in Mathematics, Vol. 304. Springer-Verlag, Berlin,
  1972.

\bibitem{Davis-Luck}
James~F. Davis and Wolfgang L{\"u}ck.
\newblock Spaces over a category and assembly maps in isomorphism conjectures
  in {$K$}- and {$L$}-theory.
\newblock {\em $K$-Theory}, 15(3):201--252, 1998.

\bibitem{Dwyer-homology-decompositions}
W.~G. Dwyer.
\newblock Homology decompositions for classifying spaces of finite groups.
\newblock {\em Topology}, 36(4):783--804, 1997.

\bibitem{Elmendorf}
A.~D. Elmendorf.
\newblock Systems of fixed point sets.
\newblock {\em Trans. Amer. Math. Soc.}, 277(1):275--284, 1983.

\bibitem{Goodwillie-calculus-II}
Thomas~G. Goodwillie.
\newblock Calculus. {II}. {A}nalytic functors.
\newblock {\em $K$-Theory}, 5(4):295--332, 1991/92.

\bibitem{Grodal}
Jesper Grodal.
\newblock Higher limits via subgroup complexes.
\newblock {\em Ann. of Math. (2)}, 155(2):405--457, 2002.

\bibitem{Kasprowski}
Daniel Kasprowski.
\newblock On the {$K$}-theory of groups with finite decomposition complexity.
\newblock {\em Proc. Lond. Math. Soc. (3)}, 110(3):565--592, 2015.

\bibitem{Leary-Nucinkis}
Ian~J. Leary and Brita E.~A. Nucinkis.
\newblock Every {CW}-complex is a classifying space for proper bundles.
\newblock {\em Topology}, 40(3):539--550, 2001.

\bibitem{Luck-survey}
Wolfgang L{\"u}ck.
\newblock Survey on classifying spaces for families of subgroups.
\newblock In {\em Infinite groups: geometric, combinatorial and dynamical
  aspects}. Edited by Laurent Bartholdi, Tullio Ceccherini-Silberstein, Tatiana Smirnova-Nagnibeda and Andrzej Zuk. {\em Progr. Math.}, 248, pages 269--322. Birkh\"auser,
  Basel, 2005.

\bibitem{MNN}
Akhil Mathew, Niko Naumann, and Justin Noel.
\newblock Derived induction and restriction theory.
\newblock arXiv:1507.06867, 2015.

\bibitem{Quillen}
Daniel Quillen.
\newblock Higher algebraic {$K$}-theory. {I}.
\newblock In {\em Algebraic {$K$}-theory, {I}: {H}igher {$K$}-theories ({P}roc.
  {C}onf., {B}attelle {M}emorial {I}nst., {S}eattle, {W}ash., 1972)}, pages
  85--147. Edited by H. Bass.  Lecture Notes in Math., Vol. 341. Springer, Berlin, 1973.

\bibitem{Rosenthal}
David Rosenthal.
\newblock Splitting with continuous control in algebraic {$K$}-theory.
\newblock {\em $K$-Theory}, 32(2):139--166, 2004.

\bibitem{Rosenthal-L}
David Rosenthal.
\newblock Continuous control and the algebraic {$L$}-theory assembly map.
\newblock {\em Forum Math.}, 18(2):193--209, 2006.

\bibitem{Segal-class-ss}
Graeme Segal.
\newblock Classifying spaces and spectral sequences.
\newblock {\em Inst. Hautes \'Etudes Sci. Publ. Math.}, (34):105--112, 1968.

\bibitem{Steenrod-convenient}
N.~E. Steenrod.
\newblock A convenient category of topological spaces.
\newblock {\em Michigan Math. J.}, 14:133--152, 1967.

\bibitem{Thomason-thesis}
R.~W. Thomason.
\newblock Homotopy colimits in the category of small categories.
\newblock {\em Math. Proc. Cambridge Philos. Soc.}, 85(1):91--109, 1979.

\end{thebibliography}
\end{document}